\theoremstyle{theorem}
\newtheorem{theorem}{Theorem}[section]
\theoremstyle{definition}
\newtheorem{definition}[theorem]{Definition}
\newtheorem{example}[theorem]{Example}
\newtheorem{remark}[theorem]{Remark}
\newcommand{\Int}[1]{
 {\mathrm{Int}(#1)}
}
\begin{document}

\title{Alteration of Seifert surfaces}
\author{Ayumu Inoue}
\address{Department of Mathematics, Tsuda University, 2-1-1 Tsuda-machi, Kodaira-shi, Tokyo 187-8577, Japan}
\email{ayminoue@tsuda.ac.jp}

\subjclass[2020]{57K10, 57K20}
\keywords{Seifert surface, compression, tubing, cut-and-paste}

\begin{abstract}
We introduce the notion of alteration of a surface embedded in a 3-manifold extending that of compression.
We see that given two Seifert surfaces of the same link are related to each other by ``single'' alteration, even if they are not by compression.
\end{abstract}

\maketitle

\section{Introduction}
\label{sec:introduction}

Relationships between Seifert surfaces of the same link have been studied from various points of view.
It is well-known that given two Seifert surfaces of the same link are tube equivalent, i.e., they are related up to ambient isotopy by the addition or removal of tubes.
Kakimizu \cite{Kak1992} introduced a complex for a non-split link, of which the vertices correspond to the isotopy class of the minimal Seifert surfaces of the link and vertices span a simplex if they have mutually disjoint representatives in the exterior of the link.
He showed that this complex is connected, extending the work of Scharlemann and Tompson \cite{ST1988} for knots.
Schultens \cite{Sch2010} showed that the Kakimizu complex of a knot is simply connected, then Przytycki and Schultens \cite{PS2012} extended the claim for non-split links.
Several authors (Schaufele \cite{Sch1967} is the first one as far as the author know) gave us Seifert surfaces of links which are not minimal but incompressible.
Kobayashi \cite{Kob1989} introduced a genus three Seifert surface of the trivial knot, depicted in Figure \ref{fig:trivial_knot_orig}, which has a funny property on compression:
Although we can compress it to a minimal one (i.e., a disk) by a sequence of compression, this Seifert surface has no triple of mutually disjoint compressing disks along which we obtain a minimal one by compression.
\begin{figure}[htbp]
 \centering
 \includegraphics[scale=0.20]{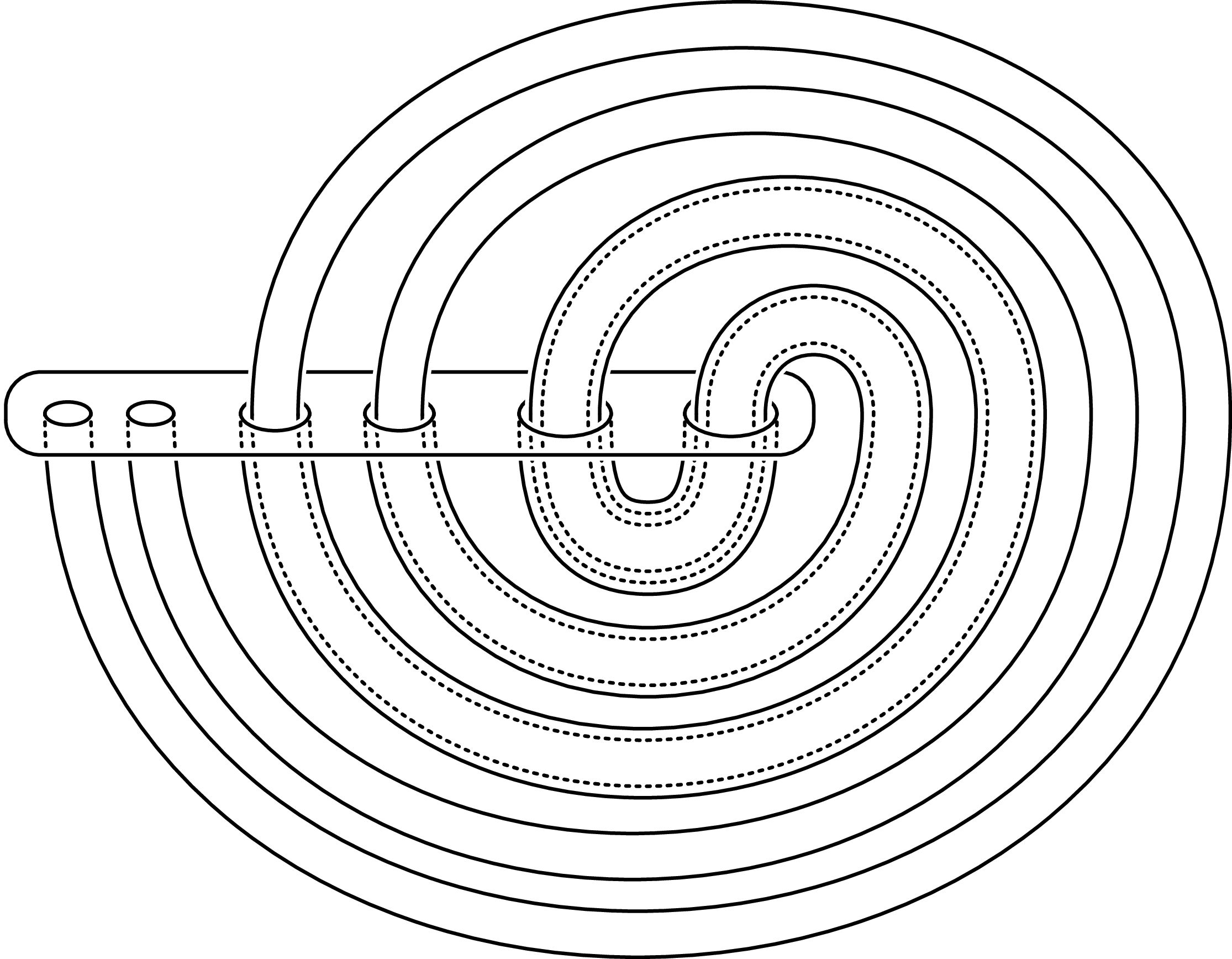}
 \caption{Kobayashi's Seifert surface of the trivial knot}
 \label{fig:trivial_knot_orig}
\end{figure}

In the studies, surgery of surfaces (e.g., compression, tubing, cut-and-paste) has often been playing key roles.
The aim of this paper is to introduce the following surgery method of surfaces and to examine how it works on Seifert surfaces:

\begin{definition}
Let $S$ be an oriented surface, which is not necessarily connected or closed, embedded in a 3-manifold $M$.
A connected orientable surface $F$ with non-empty boundary embedded in $M$ is said to be an \emph{altering surface} for $S$ if we can thicken $F$ to $F \times [-1, 1]$ in $M$ so that it satisfies the following conditions:
\begin{itemize}
\item
$F \times \{ 0 \} = F$
\item
$S \cap (F \times [-1, 1]) = \partial F \times [-1, 1] \subset \Int{S}$
\item
The orientation of $S \setminus (\partial F \times (-1, 1))$ is consistently extended to that of the surface $T = (S \cup \partial (F \times [-1, 1])) \setminus (\partial F \times (-1, 1))$ embedded in $M$
\end{itemize}
We say that $T$ is obtained from $S$ by \emph{alteration} along $F$.
\end{definition}

A compressing disk $D$ for $S$ is a typical altering surface, and alteration of $S$ along $D$ is nothing less than compression of $S$ along $D$.
We note that alteration might increase the first Betti number of surfaces, while compression certainly does not.
In contrast with behavior of compression, mentioned above, we have the following theorem:

\begin{theorem}
\label{thm:main1}
For given Seifert surfaces $S$ and $T$ of the same link, there are mutually disjoint altering surfaces for $S$ along which we obtain $T$ up to ambient isotopy from $S$ by sequential alteration removing the closed components.
\end{theorem}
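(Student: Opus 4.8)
The plan is to put $S$ and $T$ in general position relative to one another and to read the altering surfaces directly off $T$, rather than to go through the tube-equivalence of the introduction step by step. First I would use an ambient isotopy to arrange that $S$ and $T$ coincide on a collar $L \times [0,1]$ of their common boundary $L$ and meet transversally in the interior, so that $S \cap T$ is the union of this collar with a finite family $C$ of disjoint circles lying in $\Int{S} \cap \Int{T}$. Fix a copy $\gamma \subset \Int{S} = \Int{T}$ of $L$ at the inner edge of the shared collar, disjoint from $C$. Since $S$ and $T$ are Seifert surfaces of the oriented link $L$ they induce the same orientation on $L$, hence on the shared collar; this is the feature that will let the orientation condition in the definition of alteration be met at every stage.

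The key mechanism is already visible when $C = \emptyset$, i.e.\ when $S$ and $T$ meet only along the collar. Let $F$ be the closure of $T \setminus (\text{collar})$, a connected orientable surface with $\partial F = \gamma \subset \Int{S}$ whose interior is disjoint from $S$. Thickening $F$ with its product framing taken tangent to $S$ along $\gamma$, alteration along $F$ cuts $S$ along $\gamma$ and caps the two resulting copies of $\gamma$ with the two parallel copies $F \times \{-1\}$ and $F \times \{1\}$. Tracing the pieces, the shared collar together with one copy of $F$ reassembles $T$, while the other copy of $F$ together with the inner part $S \setminus (\text{collar})$ closes up into the closed surface $(S \setminus \text{collar}) \cup (T \setminus \text{collar})$; orientability of this closed component is automatic, since the two copies $F \times \{\pm 1\}$ acquire opposite orientations, which is exactly the reversal needed to glue the two boundary-coherent halves. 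Thus a single alteration along $F$ turns $S$ into $T$ together with one closed component, and discarding the latter yields $T$. This settles the theorem whenever the interiors of $S$ and $T$ can be made disjoint.

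For the general case I would take as altering surfaces the closures $P_1, \dots, P_m$ of the components of $T \setminus (\text{collar} \cup C)$, each a connected orientable surface with $\partial P_i \subset \gamma \cup C \subset \Int{S}$ and $\Int{P_i} \cap S = \emptyset$. The decisive local observation is that along each circle $c \in C$ the surface $T$ crosses $S$ transversally, so the two sheets of $T$ incident to $c$ leave $S$ on \emph{opposite} sides; hence the product neighborhoods of the $P_i$ can be pushed to the correct sides of $S$, and after a small bending of each $P_i$ near $C$ so that distinct pieces meet $S$ along slightly disjoint push-offs of the circles of $C$, the $P_i$ become mutually disjoint altering surfaces. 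Performing the alterations sequentially then globalizes the computation of the disjoint case: along every cutting curve one copy of the adjacent $T$-pieces is used to rebuild $T$, while the complementary copies together with the inner part of $S$ assemble into closed surfaces, which we remove.

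The main obstacle is exactly the behaviour at the circles $C$. Two points require care: making the $P_i$ genuinely disjoint as altering surfaces, since a priori they share the circles of $C$ along their boundaries, and checking that the orientation of $S$ with the cutting collars removed extends consistently over all the inserted copies at once. I would dispatch both by working in a standard local model near each $c \in C$ (two transverse sheets, thickened to opposite sides) and verifying the claims there, and I would make the global bookkeeping rigorous by induction on $|C|$: the base case $|C| = 0$ is the computation of the second paragraph, and the inductive step alters along a piece $P_i$ meeting the collar so as to strip off the outermost circles of $C$, reducing $|C|$ while producing only closed components to discard.
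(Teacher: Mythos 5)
Your overall strategy --- reading the altering surfaces off the pieces of $T$ cut along $S \cap T$, rather than inducting over a tube-equivalence sequence as the paper does --- is genuinely different from the paper's proof, but as written it has a real gap at the decisive step: the orientation condition in the definition of an altering surface. For a connected altering surface $F$ with several boundary circles, coherence is \emph{not} a condition you can check circle by circle. The two parallel copies $F \times \{\pm 1\}$ inserted by the alteration are connected, so the orientation each copy must carry is already forced by the gluing along a \emph{single} boundary circle; working out the local model shows that the orientations forced at two different boundary circles agree precisely when $F$ leaves $S$ on the \emph{same} side at both circles. Hence a piece $P_i$ is a legitimate altering surface only if it approaches $S$ from one and the same side at all of its boundary circles. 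Your plan to ``dispatch'' the orientation check ``by working in a standard local model near each $c \in C$'' therefore cannot succeed: the local picture at each circle (two transverse sheets on opposite sides) is equally consistent with a piece that leaves $S$ upward at one circle of $C$ and downward at another --- for instance with a single piece abutting some circle of $C$ from both sides, which is what would happen if a circle of $C$ failed to separate $T$ --- and for such a piece the third bullet of the definition fails outright, so alteration along it is not even defined.

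The statement you need is in fact true, but it requires a global argument absent from your proof. Since $S$ and $T$ are Seifert surfaces of the same oriented link, $S - T$ is a $2$-cycle in $S^{3}$ and hence bounds; this yields a locally constant function on $S^{3} \setminus (S \cup T)$ jumping by $+1$ across $S$ and by $-1$ across $T$ (the same parity function that underlies the proof of tube equivalence in \cite{BFK1998}). Consequently no complementary region of $S \cup T$ touches $S$ from both sides; since the interior of each piece $P_i$ is flanked by exactly two such regions, each $P_i$ meets $S$ from a single side at all of its boundary circles, and in particular every circle of $C$ separates $T$, so the bad configuration above never occurs. With that lemma supplied, your construction can be pushed through, though you still owe the bookkeeping that for every piece the \emph{same} parallel copy is used at all of its circles to reassemble $T$ (this needs a consistent global bending convention, e.g.\ bending each piece toward its positive or negative normal side according to the side of $S$ it lies on), and the concluding ``induction on $|C|$'' should be made precise or replaced by this one-shot accounting. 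By contrast, the paper sidesteps all of this by quoting tube equivalence and inducting over the handle surgeries; your approach, once repaired, buys something the paper's does not make explicit --- a single SAS read off directly from the intersection pattern $S \cap T$.
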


We prove the theorem in Section \ref{sec:proof_of_theorem1}.
We further see several altering surfaces for Seifert surfaces, in Section \ref{sec:examples}, which demonstrate the difference between compression and alteration.

Throughout this paper, each link and its Seifert surfaces are assumed to be oriented consistently and to lie in the three sphere.
Although each Seifert surface of a link is allowed to be disconnected, it is not allowed to have closed components.

\section{Quick review on tube equivalence}
\label{sec:quick_review_on_tube_equivalence}

We prove Theorem \ref{thm:main1} in light of the fact that given two Seifert surfaces of the same link are tube equivalent.
We thus recall the fact briefly in this section.

Let $S$ be a surface, which is not necessarily connected or closed, embedded in a 3-manifold $M$.
A 3-ball $V = D^{2} \times D^{1}$ (or $V = D^{1} \times D^{2}$) embedded in $M$ is said to be a \emph{1-handle} (or a \emph{2-handle}) attaching to $S$ if the intersection of $S$ and $V$ coincides with $D^{2} \times \partial D^{1}$ (or $D^{1} \times \partial D^{2}$) and lies in $\Int{S}$.
Associated with a 1- or 2-handle $V$ attaching to $S$, we have the surface $T = (S \cup \partial V) \setminus \Int{S \cap V}$ embedded in $M$.
We say that $T$ is obtained from $S$ by \emph{surgery} along $V$.
Further assume that $S$ is oriented.
Then $V$ is said to be \emph{coherent} if the orientation of $S \setminus \Int{S \cap V}$ is consistently extended to that of $T$.

Two oriented surfaces $S$ and $T$ embedded in $M$ are said to be \emph{tube equivalent} if there is a finite sequence
\[
 S = S_{0} \to S_{1} \to S_{2} \to \dots \to S_{N} = T \qquad (N \geq 0)
\]
of oriented surfaces $S_{i}$ embedded in $M$ in which $S_{i+1}$ is obtained from $S_{i}$ up to ambient isotopy by surgery along a coherent 1- or 2-handle attaching to $S_{i}$.
We note that surgery of a surface along a 1- or 2-handle is respectively regarded as addition or removal of a tube to or from the surface.
The following theorem is known well:

\begin{theorem}
\label{thm:tube_equivarent}
Given two Seifert surfaces of the same link are tube equivalent.
\end{theorem}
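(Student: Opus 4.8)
The plan is to realize both Seifert surfaces as regular preimages of maps to the circle that lie in one and the same homotopy class, and then to extract the required surgeries from a generic homotopy between these maps. Two preliminary remarks streamline the argument. First, tube equivalence is genuinely an equivalence relation: reversing a surgery along a 1-handle is a surgery along a 2-handle and conversely, so it is symmetric, and sequences concatenate, so it is transitive. Second, because the ambient manifold $S^{3}$ is oriented, an orientation of an embedded surface is the same datum as a co-orientation, and the co-orientation along the attaching region of a handle extends across its core; hence every surgery we perform below is automatically coherent, and I will not belabor the orientation bookkeeping.

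First I would pass to the link exterior $E = S^{3} \setminus \Int{\nu(L)}$ and use the classical correspondence between Seifert surfaces and maps to the circle. Every Seifert surface of $L$ is, up to isotopy, the preimage $g^{-1}(\theta_{0})$ of a regular value of some smooth map $g \colon E \to S^{1}$ whose restriction to $\partial E$ is the fixed projection sending each longitude to $\theta_{0}$ and each meridian degree one. The homotopy class of such a $g$ is the element of $[E, S^{1}] \cong H^{1}(E; \mathbb{Z})$ Poincar\'e--Lefschetz dual to $[S] \in H_{2}(E, \partial E)$, and this class is determined by $\partial[S] = [\partial S] = [L]$, hence is the \emph{same} for every Seifert surface of $L$. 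Thus I may write $S = g_{0}^{-1}(\theta_{0})$ and $T = g_{1}^{-1}(\theta_{0})$ with $g_{0} \simeq g_{1}$ rel $\partial E$.

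Next I would choose a generic homotopy $G \colon E \times [0,1] \to S^{1}$ from $g_{0}$ to $g_{1}$, fixed near $\partial E$, and follow the one-parameter family of surfaces $\Sigma_{t} = G_{t}^{-1}(\theta_{0})$, with $\Sigma_{0} = S$ and $\Sigma_{1} = T$. By Cerf theory the surfaces $\Sigma_{t}$ vary by ambient isotopy except at finitely many parameter values, where the generic bifurcations are either a saddle move or a birth or death of a small sphere component. A saddle move adds or removes a tube, which is precisely a surgery along a coherent 1- or 2-handle attaching to $\Sigma_{t}$ in the sense of the definitions above; all the action occurs in $\Int{E}$ since $G$ is fixed near $\partial E$, so the attaching regions lie in the interiors of the surfaces as required. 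Reading off these saddle moves in order would exhibit $T$ as the result of a finite sequence of coherent tube surgeries on $S$, that is, $S$ and $T$ would be tube equivalent.

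The main obstacle is the treatment of the birth and death of sphere components, since these are \emph{not} surgeries along 1- or 2-handles and so are not admitted by the definition of tube equivalence. I would dispose of them by a standard Cerf-theoretic rearrangement: because the two endpoints $S$ and $T$ have no closed components, every sphere that is born must later either die or be joined to the rest of the surface by a saddle, and in each case the birth can be paired with and cancelled against a later move so that their net effect on the surface is an isotopy. After removing all such births and deaths, the surviving sequence consists solely of coherent 1- and 2-handle surgeries, completing the proof. A naive alternative --- isotoping $S$ and $T$ into general position and cancelling their intersection circles by innermost-disk compressions --- runs aground exactly on intersection curves that are essential in both surfaces, and it is the circle-to-$S^{1}$ viewpoint above that circumvents this difficulty.
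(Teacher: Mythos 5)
The paper does not actually prove this theorem: it states it and refers to Bar-Natan--Fulman--Kauffman \cite{BFK1998}, whose proof is elementary and diagrammatic (every Seifert surface is tube equivalent to one produced by Seifert's algorithm on some diagram, and Seifert-algorithm surfaces are compared across Reidemeister moves). Your argument is instead the classical transversality proof --- Pontryagin--Thom to $S^{1}$, a generic homotopy $G$, and Morse/Cerf analysis of the trace cobordism $W = G^{-1}(\theta_{0}) \subset E \times [0,1]$ --- which is precisely the machinery-heavy proof that \cite{BFK1998} was written to avoid. As a route it is legitimate and conceptually clean (and generalizes away from diagrams, e.g.\ to spanning surfaces in other $3$-manifolds), and your observation that co-orientations make every surgery automatically coherent is correct, since preimages of regular values of $S^{1}$-valued maps are naturally co-oriented.

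Two steps, however, need more than you give. First, ``$g_{0} \simeq g_{1}$ rel $\partial E$'' does not follow from equality of classes in $[E, S^{1}] \cong H^{1}(E;\mathbb{Z})$ when the link has $n \geq 2$ components: for maps agreeing on $\partial E$, the rel-boundary difference class lives in $H^{1}(E, \partial E; \mathbb{Z})$, and the exact sequence $H^{0}(\partial E) \to H^{1}(E, \partial E) \to H^{1}(E)$ shows that the fibre of restriction has rank $n-1$, so homotopic maps need not a priori be homotopic rel $\partial E$. The obstruction does vanish, but this requires an argument: for instance, $H_{2}(\partial E) \to H_{2}(E)$ is onto for a link exterior, so $\partial \colon H_{2}(E, \partial E) \to H_{1}(\partial E)$ is injective and hence $[S] = [T]$ in $H_{2}(E, \partial E)$; equivalently, the difference class pairs trivially with every arc joining boundary tori, as one sees by closing such an arc up along $\partial E$. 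For a knot $H_{2}(E) = 0$ and there is nothing to check, but the theorem is stated for links, so this must be said.

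Second, the birth/death cancellation is sketchier than ``pair each birth with a later move.'' A newborn sphere may first be tubed to another newborn sphere, or to itself (producing a torus), before anything reaches the main surface; moreover $W$ may have closed components, disjoint from both $S$ and $T$, whose entire contribution is a family of closed surfaces that never interacts with the main surface and cannot be cancelled by your pairing at all. The standard repair works at the level of $W$: discard the closed components outright (the moves they contribute have support disjoint from everything that survives), and in each remaining component rearrange critical values by index and cancel index-$0$ against index-$1$ critical points (dually, index-$3$ against index-$2$), in the style of Milnor's handle-cancellation for cobordisms, together with the deflation lemma that tubing a surface to a sphere bounding a ball disjoint from it is an ambient isotopy. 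What remains is then exactly a finite sequence of coherent $1$- and $2$-handle surgeries. With these two points filled in, your proof is complete, though it remains a genuinely different --- and less elementary --- route than the one the paper cites.
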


We refer the reader to \cite{BFK1998} for an elementary but elegant proof of the theorem.

\section{Proof of Theorem \ref{thm:main1}}
\label{sec:proof_of_theorem1}

We devote this section to prove Theorem \ref{thm:main1}.
We start with preparing some terminologies.

Let $S$ be an oriented surface embedded in a 3-manifold, which is not necessarily connected or closed.
Suppose that $F_{1}, F_{2}, \dots, F_{n}$ are mutually disjoint altering surfaces for $S$ ($n \geq 1$).
Without loss of generality, we may assume that $F_{i} \times [-1, 1]$ is also disjoint from $F_{j} \times [-1, 1]$ for each $i$ and $j$ ($1 \leq i < j \leq n$).
Then we call $\mathscr{F} = \{ F_{1}, F_{2}, \dots, F_{n} \}$ a \emph{system of altering surfaces} (\emph{SAS} for short) for $S$.
We let alteration of $S$ along a SAS $\mathscr{F}$ mean sequential alteration of $S$ along the altering surfaces in $\mathscr{F}$.
We further allow a SAS to be the empty set.
Alteration of $S$ along the empty set does not change $S$ at all.

Suppose that $S$ is a Seifert surface of a link and $\mathscr{F}$ a SAS for $S$.
In this case, we obtain the union of a Seifert surface $T^{\prime}$ of the link and some (or no) closed surfaces from $S$ by alteration along $\mathscr{F}$.
Since we are mainly interested in Seifert surfaces, in this paper, we say in this situation that $\mathscr{F}$ \emph{yields} $T^{\prime}$ for abbreviation.

With the terminologies, Theorem \ref{thm:main1} is restated as follows:

\begin{theorem}[restatement of Theorem \ref{thm:main1}]
\label{thm:main1'}
For given Seifert surfaces $S$ and $T$ of the same link, there is a SAS for $S$ which yields $T$ up to ambient isotopy.
\end{theorem}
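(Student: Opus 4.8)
The plan is to build the required system of altering surfaces (a SAS) out of $T$ itself, and to use Theorem~\ref{thm:tube_equivarent} to cope with the interior intersections of $S$ and $T$, exploiting throughout the freedom to discard closed components. I begin with the model case. Isotope $T$ so that $S$ and $T$ agree on a collar $N(L)$ of the common boundary $L$ and are transverse in its complement; write $L^{\prime} = \partial N(L) \cap S \subset \Int{S}$ and put $S^{\circ} = S \setminus \Int{N(L)}$, $T^{\circ} = T \setminus \Int{N(L)}$. Suppose first that $\Int{S^{\circ}} \cap \Int{T^{\circ}} = \emptyset$. Since Seifert surfaces have no closed components, every component of $T^{\circ}$ has non-empty boundary lying in $L^{\prime} \subset \Int{S}$, while its interior misses $S$; hence the components of $T^{\circ}$ constitute a SAS $\mathscr{F}$ for $S$, a thin thickening meeting $S$ exactly along $\partial T^{\circ} \times [-1,1] = L^{\prime} \times [-1,1]$. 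Checking the definition, alteration of $S$ along $\mathscr{F}$ produces two families of pieces: one reassembling $N(L) \cup T^{\circ} = T$, and the other the closed surface obtained by gluing a copy of $S^{\circ}$ to a copy of $T^{\circ}$ along $L^{\prime}$. Discarding the latter, $\mathscr{F}$ yields $T$. Orientation coherence is automatic, as each altering surface is a coherently oriented piece of $T$, whose boundary orientation agrees with that of $S$.

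This already settles the theorem whenever $S$ and $T$ can be made interior-disjoint, and it displays the role of discarding closed components. In general the intersection $C = \Int{S^{\circ}} \cap \Int{T^{\circ}}$ is a non-empty family of circles that cannot be removed by isotopy or innermost-disk surgery; this is exactly the incompressibility phenomenon motivating the paper, so the altering surfaces must genuinely be non-disks. To handle it I invoke Theorem~\ref{thm:tube_equivarent}: $S$ and $T$ are joined by a finite sequence of surgeries along coherent $1$- and $2$-handles. I first record two local realizations. Surgery along a coherent $2$-handle is compression along a disk $D$, which is alteration along $D$; thus every tube removal is alteration along a disk altering surface. A coherent $1$-handle with feet $d_{1}, d_{2} \subset S$ is realized by alteration along an annulus $A$ with $\partial A = \partial d_{1} \sqcup \partial d_{2}$ running parallel to the handle's core through the complementary region of $S$: cutting $S$ along $\partial d_{1}, \partial d_{2}$ and capping with two copies of $A$ produces the tubed surface together with a $2$-sphere assembled from $d_{1}$, $d_{2}$ and a copy of $A$, which is discarded. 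Hence each elementary surgery is an alteration, at the cost of one discarded closed component.

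It remains to assemble the whole sequence into a single SAS for the fixed surface $S$. The plan is to thread the disk- and annulus-altering surfaces of the successive steps through mutually disjoint parallel copies of a bicollar of $S$, pushing the feet of each later handle back onto $S$ along the tubes created earlier and absorbing the resulting crossings into further closed components; equivalently, one first stabilizes $S$ and $T$ to a common surface by a simultaneous, hence disjoint, family of tubings, and then realizes the remaining compressions by disjoint cocore disks. Either route converts the difficulty with $C$ into extra discarded closed surfaces rather than into forbidden intersections.

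The main obstacle is precisely this last piece of bookkeeping: guaranteeing that the altering surfaces, all positioned relative to the \emph{original} $S$ rather than to the evolving intermediate surfaces, remain mutually disjoint with disjoint thickenings, while their combined alteration, after the closed components are discarded, yields a surface ambient isotopic to $T$. The orientation-coherence requirement imposes no additional difficulty, since every altering surface produced above is an oriented subsurface of $S$, of $T$, or of a handle carrying a compatible orientation.
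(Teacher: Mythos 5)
Your model case (when $S$ and $T$ can be isotoped to have disjoint interiors, the components of $T$ pushed off the boundary collar form a SAS for $S$ yielding $T$ plus one discarded closed surface) is correct, and so are your two local realizations: a coherent 2-handle surgery is alteration along a disk, and a coherent 1-handle surgery is alteration along an annulus parallel to the tube, at the cost of a discarded 2-sphere. But these ingredients only show that each \emph{single} move in the sequence of Theorem \ref{thm:tube_equivarent} is an alteration of the surface it is applied to. The theorem requires one system of \emph{mutually disjoint} altering surfaces for the \emph{original} $S$, and you state yourself that guaranteeing this is ``the main obstacle'' — that is, the proposal stops exactly where the proof has to begin. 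Neither of your suggested routes closes this gap as stated. Threading the later surfaces through parallel copies of a bicollar of $S$ does not make sense for the later handles in the sequence: they attach to intermediate surfaces $T_i$, and their feet may lie on tubes created by earlier 1-handles or on sheets produced by earlier compressions, so there is no well-defined way to reposition them as surfaces with boundary in $\Int{S}$ while keeping disjointness. The common-stabilization route has the same defect in sharper form: even granting that $S$ and $T$ stabilize by tubing to a common surface $U$, the cocore disks that compress $U$ back down to $T$ have their boundaries on the tubes attached to $T$, hence not in $\Int{S}$ at all; they are not altering surfaces for $S$, and converting them into such is precisely the omitted work. (Also, the tubes taking $S$ to $U$ need not form a ``simultaneous, hence disjoint'' family: feet of later tubes may lie on earlier tubes.)

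The paper resolves this by running the tube-equivalence sequence in the opposite direction, $T = T_0 \to T_1 \to \dots \to T_N = S$, and inductively maintaining the invariant that $T_i$ carries a SAS $\mathscr{F}_i$ yielding the fixed target $T$. The entire content of the proof is the inductive step, in which the existing altering surfaces of $\mathscr{F}_j$ are made transverse to the new handle $V$ and their intersections with $V$ are explicitly resolved: for a 1-handle, the altering surfaces meet $V$ in meridional disks, one cuts them along these disks and inserts new meridional disks in between; for a 2-handle, they meet $V$ in segments, disks, and annuli, and one reassembles the pieces using bands and annuli cut from $\partial V$ (the surfaces $A_u$, $B_v$, $C_w$ of the paper). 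This intersection-resolution is exactly the ``bookkeeping'' you defer, and it is not routine: it is where the new altering surfaces acquire higher genus and where the discarded closed components arise. Without an argument of this kind — some induction that transports a disjoint system across each elementary move while controlling its intersections with the next handle — the proposal does not prove the theorem.
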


\begin{proof}
In light of Theorem \ref{thm:tube_equivarent}, we have a finite sequence
\[
 T = T_{0} \to T_{1} \to T_{2} \to \dots \to T_{N} = S \qquad (N \geq 0)
\]
of Seifert surfaces $T_{i}$ of the link in which $T_{i+1}$ is obtained from $T_{i}$ up to ambient isotopy by surgery along a coherent 1- or 2-handle attaching to $T_{i}$.
We inductively see that the following claim is true for each $i$ ($0 \leq i \leq N$):
\begin{itemize}
\item[($\clubsuit$)]
There is a SAS $\mathscr{F}_{i}$ for $T_{i}$ which yields $T$ up to ambient isotopy.
\end{itemize}
Since $T_{0}$ coincides with $T$, ($\clubsuit$) is obviously true for $i = 0$ letting $\mathscr{F}_{0}$ to be the empty set.
In what follows, we assume that ($\clubsuit$) is true for $i = j$.

Suppose that $T_{j+1}$ is obtained from $T_{j}$ up to ambient isotopy by surgery along a coherent 1-handle $V$ attaching to $T_{j}$.
In this case, without loss of generality, we may assume that the union of the altering surfaces in $\mathscr{F}_{j}$ intersects with $V$ at mutually disjoint 2-disks $D^{2} \times \{ p_{1}, p_{2}, \dots, p_{k} \}$, as depicted in Figure \ref{fig:1-handle_1}.
Here, $k \geq 0$ and $p_{1}, p_{2}, \dots, p_{k}$ are points in $D^{1} = [-1, 1]$ satisfying $-1 < p_{1} < p_{2} < \dots < p_{k} < 1$.
\begin{figure}[htbp]
 \centering
 \includegraphics[scale=0.15]{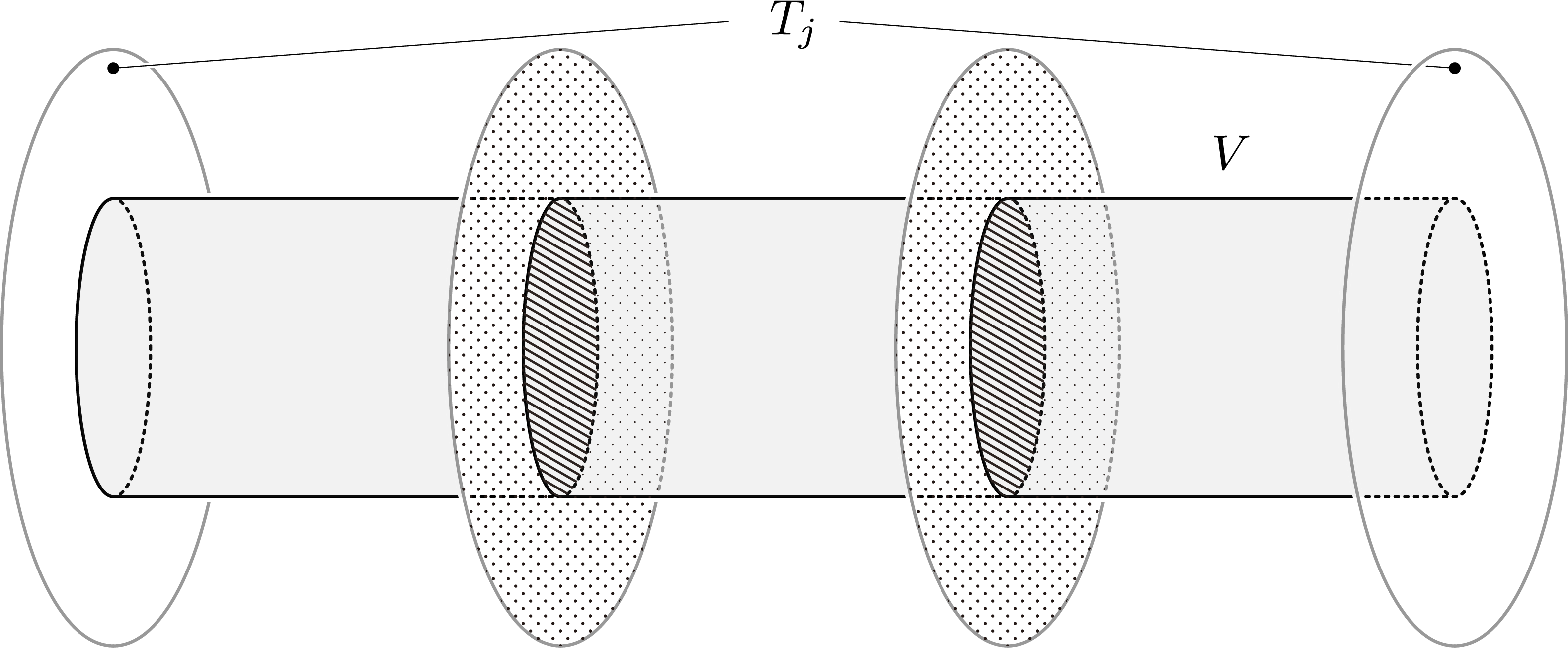}
 \caption{Some altering surfaces (dotted ones) in $\mathscr{F}_{j}$ intersect with $V$ at 2-disks (shaded ones)}
 \label{fig:1-handle_1}
\end{figure}

We construct a SAS $\mathscr{F}_{j+1}$ for $T_{j+1}$ as follows.
We first add each altering surface in $\mathscr{F}_{j}$ into $\mathscr{F}_{j+1}$ after removing the interior of its intersection with $V$.
We next add 2-disks $D^{2} \times \{ (p_{u-1} + p_{u}) / 2 \}$ ($1 \leq u \leq k+1$) into $\mathscr{F}_{j+1}$ letting $p_{0} = -1$ and $p_{k+1} = 1$.
Then, as illustrated in Figure \ref{fig:1-handle_2}, $\mathscr{F}_{j}$ and $\mathscr{F}_{j+1}$ yield the same Seifert surface up to ambient isotopy.
Thus ($\clubsuit$) is also true for $i = j + 1$.
\begin{figure}[htbp]
 \centering
 \includegraphics[scale=0.15]{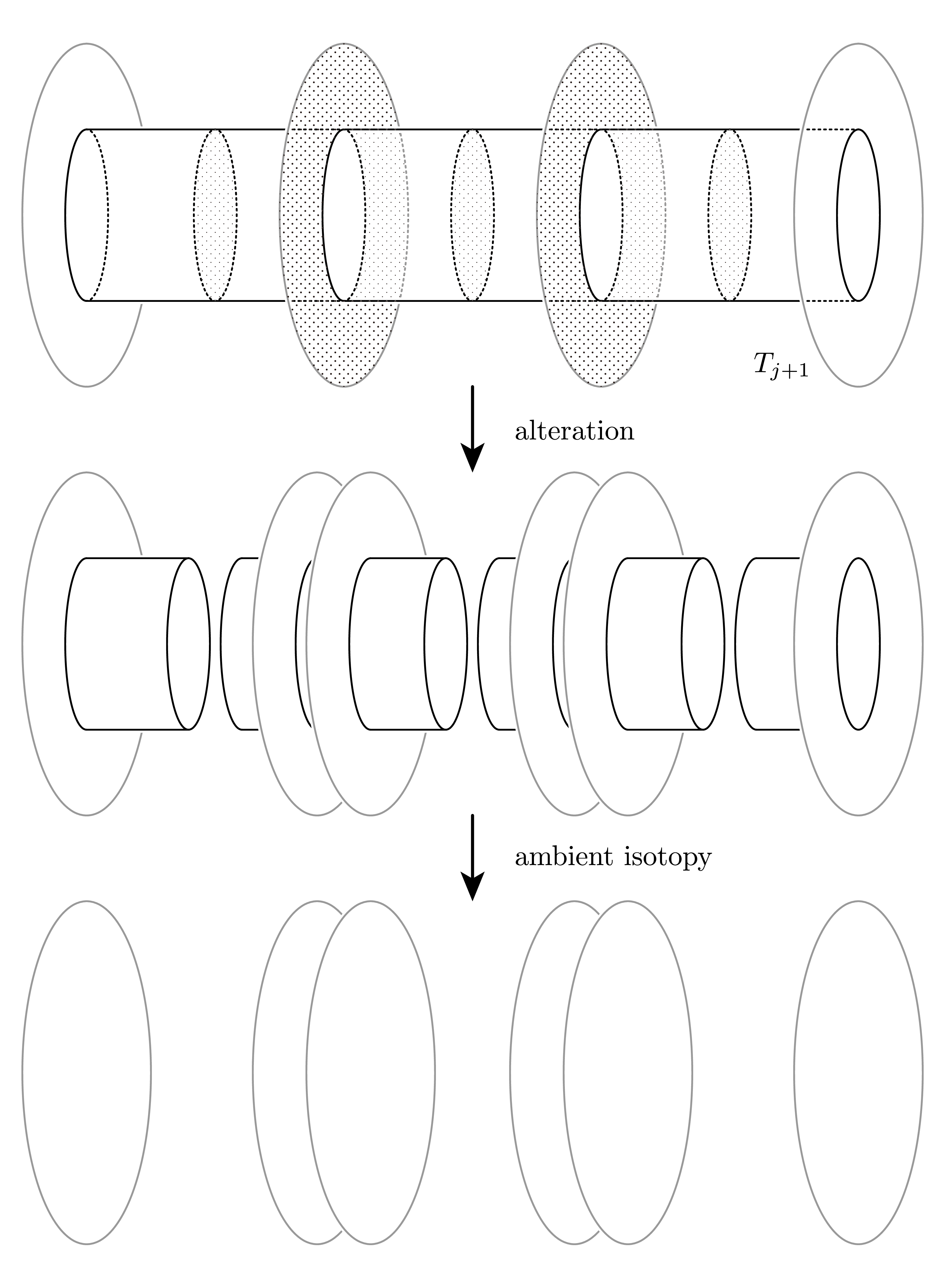}
 \caption{Alteration of $T_{j+1}$ along $\mathscr{F}_{j+1}$ (including dotted ones on the top) brings us a surface which is ambient isotopic to the surface (depicted in the bottom) obtained from $T_{j}$ by alteration along $\mathscr{F}_{j}$}
 \label{fig:1-handle_2}
\end{figure}

Suppose that $T_{j+1}$ is obtained from $T_{j}$ up to ambient isotopy by surgery along a coherent 2-handle $V$ attaching to $T_{j}$.
In this case, without loss of generality, we may assume that the union of the altering surfaces in $\mathscr{F}_{j}$ intersects with $V$ at mutually disjoint line segments $D^{1} \times \{ p_{1}, p_{2}, \dots, p_{k} \}$, 2-disks $D^{1} \times \{ \alpha_{1}, \alpha_{2}, \dots, \alpha_{l} \}$, and annuli $D^{1} \times \{ \beta_{1}, \beta_{2}, \dots, \beta_{m} \}$, as depicted in Figure \ref{fig:2-handle_1}.
Here, $k, l, m \geq 0$, $p_{1}, p_{2}, \dots, p_{k}$ are points on $\partial D^{2}$, $\alpha_{1}, \alpha_{2}, \dots, \alpha_{l}$ properly embedded arcs in $D^{2}$, and $\beta_{1}, \beta_{2}, \dots, \beta_{m}$ embedded circles in $\Int{D^{2}}$.
\begin{figure}[htbp]
 \centering
 \includegraphics[scale=0.15]{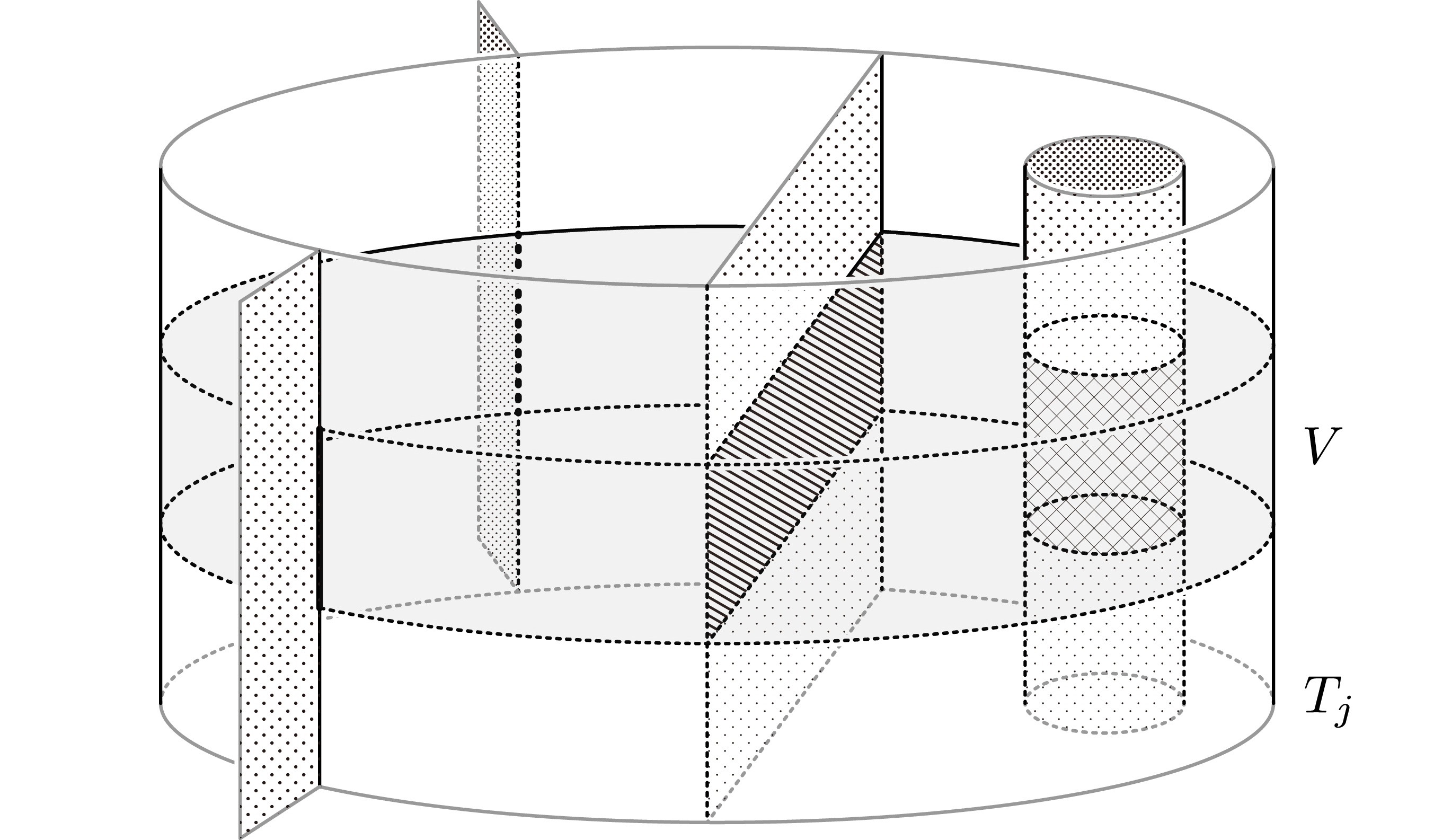}
 \caption{Some altering surfaces (dotted ones) in $\mathscr{F}_{j}$ intersect with $V$ at line segments (thicken ones), 2-disks (shaded ones), or annuli (checked ones)}
 \label{fig:2-handle_1}
\end{figure}

To construct a SAS $\mathscr{F}_{j+1}$ for $T_{j+1}$, we start with preparing the following parts.
Let $A_{1}, A_{2}, \dots, A_{r}$ and $B_{1}, B_{2}, \dots, B_{s}$ be respectively the annuli and bands each of which is a connected component of the subset
\begin{align*}
 & \left( \left( D^{1} \times \partial D^{2} \right) \cup \left( \bigcup_{u = 1}^{l} (D^{1} \times \alpha_{u}) \times \{ \pm 1 \} \right) \right) \\
 & \qquad \setminus \left( \left( \bigcup_{v = 1}^{k} (D^{1} \times \{ p_{v} \}) \times \left( - \frac{1}{2}, \frac{1}{2} \right) \right) \cup \left( \bigcup_{w = 1}^{l} (D^{1} \times \partial \alpha_{w}) \times (-1, 1) \right) \right)
\end{align*}
of $\partial V$.
Furthermore, for each circle $\beta_{u}$, we let $C_{2 u - 1} = (D^{1} \times \beta_{u}) \times \{ -1 \}$ and $C_{2 u} = (D^{1} \times \beta_{u}) \times \{ 1 \}$ be annuli.
Figure \ref{fig:2-handle_2} depicts those two types of annuli and bands.
We note that we only have an annulus $A_{1} = D^{1} \times \partial D^{2}$ if $k = l = m = 0$.
\begin{figure}[htbp]
 \centering
 \includegraphics[scale=0.15]{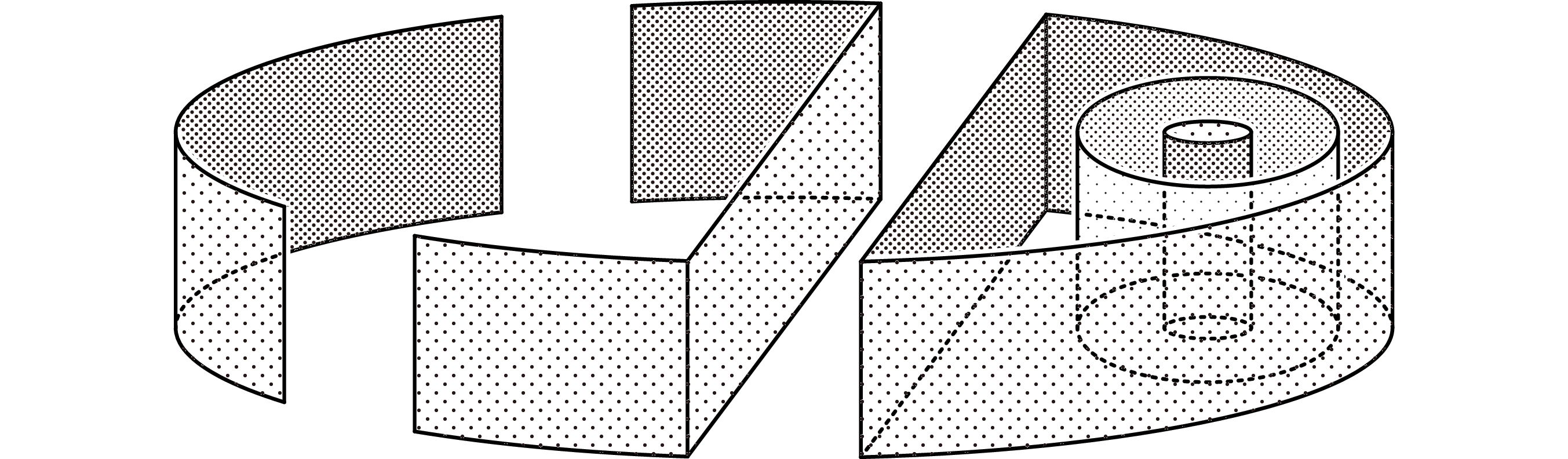}
 \caption{Two types of annuli and bands}
 \label{fig:2-handle_2}
\end{figure}

We now construct a SAS $\mathscr{F}_{j+1}$ for $T_{j+1}$ as follows.
Suppose that $F_{1}, F_{2}, \dots, F_{n}$ are the altering surfaces in $\mathscr{F}_{j}$ each of which intersects with $V$ at least once.
We first add the altering surfaces in $\mathscr{F}_{j} \setminus \{ F_{1}, F_{2}, \dots, F_{n} \}$ into $\mathscr{F}_{j+1}$.
We next add the connected components of
\[
 \left( \left( \bigcup_{u = 1}^{n} F_{u} \times \left\{ \pm \frac{1}{2} \right\} \right) \setminus \left( \Int{D^{1}} \times D^{2} \right) \right) \cup \left( \bigcup_{v = 1}^{s} B_{v} \right)
\]
into $\mathscr{F}_{j+1}$.
We finally add the annuli $A_{1}, A_{2}, \dots, A_{r}$ and $C_{1}, C_{2}, \dots, C_{2 m}$ into $\mathscr{F}_{j+1}$.
Then, as illustrated in Figure \ref{fig:2-handle_3}, $\mathscr{F}_{j}$ and $\mathscr{F}_{j+1}$ yield the same Seifert surface up to ambient isotopy.
Thus ($\clubsuit$) is also true for $i = j + 1$.
\begin{figure}[htbp]
 \centering
 \includegraphics[scale=0.15]{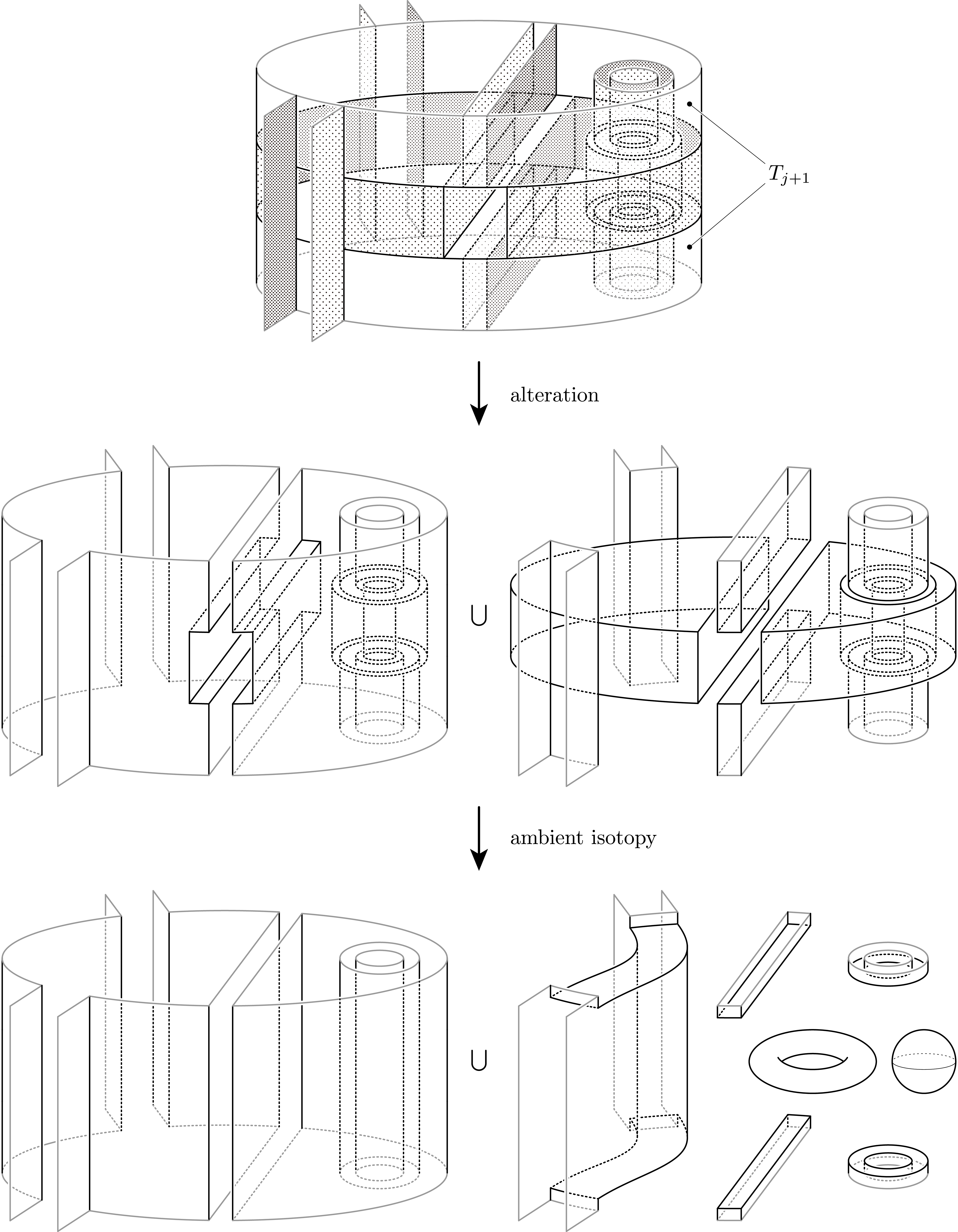}
 \caption{Alteration of $T_{j+1}$ along $\mathscr{F}_{j+1}$ (including dotted ones on the top) brings us a surface which is ambient isotopic to the union of the surface (depicted in the left-hand side of the bottom) obtained from $T_{j}$ by alteration along $\mathscr{F}_{j}$ and some closed surfaces (depicted in the right-hand side of the bottom)}
 \label{fig:2-handle_3}
\end{figure}
\end{proof}

\begin{remark}
The SAS $\mathscr{F}_{i}$, which we have constructed in the above proof, may not be ``minimal'' for $T_{i}$ to yield $T$.
For example, assume that $T_{i}$ is obtained from $T_{i-1}$ up to ambient isotopy by surgery along a coherent 2-handle $V$ attaching $T_{i-1}$, and $F$ is the only altering surface in $\mathscr{F}_{i-1}$ intersecting with $V$.
We further assume that $F \cap V$ is an annulus $D^{1} \times \beta$.
Then the SAS
\[
 \mathscr{F}_{i}^{\prime} = \left( \mathscr{F}_{i-1} \setminus \{ F \} \right) \cup \{ F \setminus \Int{V}, \, (D^{1} \times \beta) \times \{ \pm 1 \}\}
\]
for $T_{i}$ also yields $T$, although the cardinality of $\mathscr{F}_{i}^{\prime}$ is less than $\mathscr{F}_{i}$.
\end{remark}

\section{Examples}
\label{sec:examples}

We conclude the paper seeing examples of concrete altering surfaces for Seifert surfaces.
They emphasize the difference between compression and alteration.

\begin{example}
\label{eg:not_minimal}
It was proved first by Parris \cite{Par1978} and later by Oertel \cite{Oer1980} (as mentioned in Gabai's paper \cite{Gab1983}) that the Seifert surface, depicted in Figure \ref{fig:1_m5_m5_m1_5_5_orig}, of a pretzel link is not minimal but incompressible.
On the other hand, it is routine to see that the altering surface, depicted in Figure \ref{fig:1_m5_m5_m1_5_5_altering_surface}, for the Seifert surface yields a minimal Seifert surface of the link.
We note that this altering surface is homeomorphic to an annulus.
\begin{figure}[htbp]
 \centering
 \includegraphics[scale=0.15]{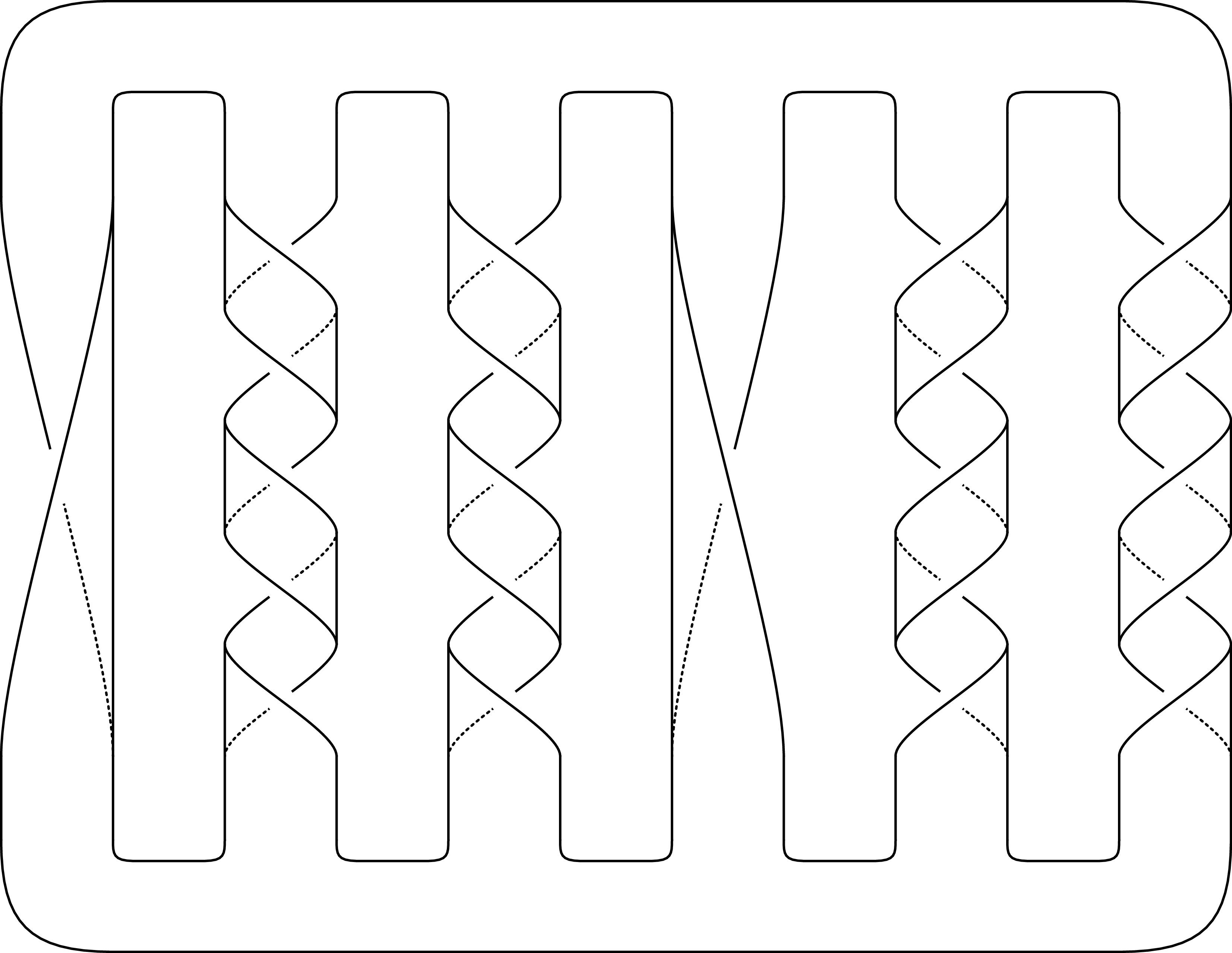}
 \caption{A Seifert surfaces of the $(1, -5, -5, -1, 5, 5)$-pretzel link, which is not minimal but incompressible}
 \label{fig:1_m5_m5_m1_5_5_orig}
\end{figure}
\begin{figure}[htbp]
 \centering
 \includegraphics[scale=0.15]{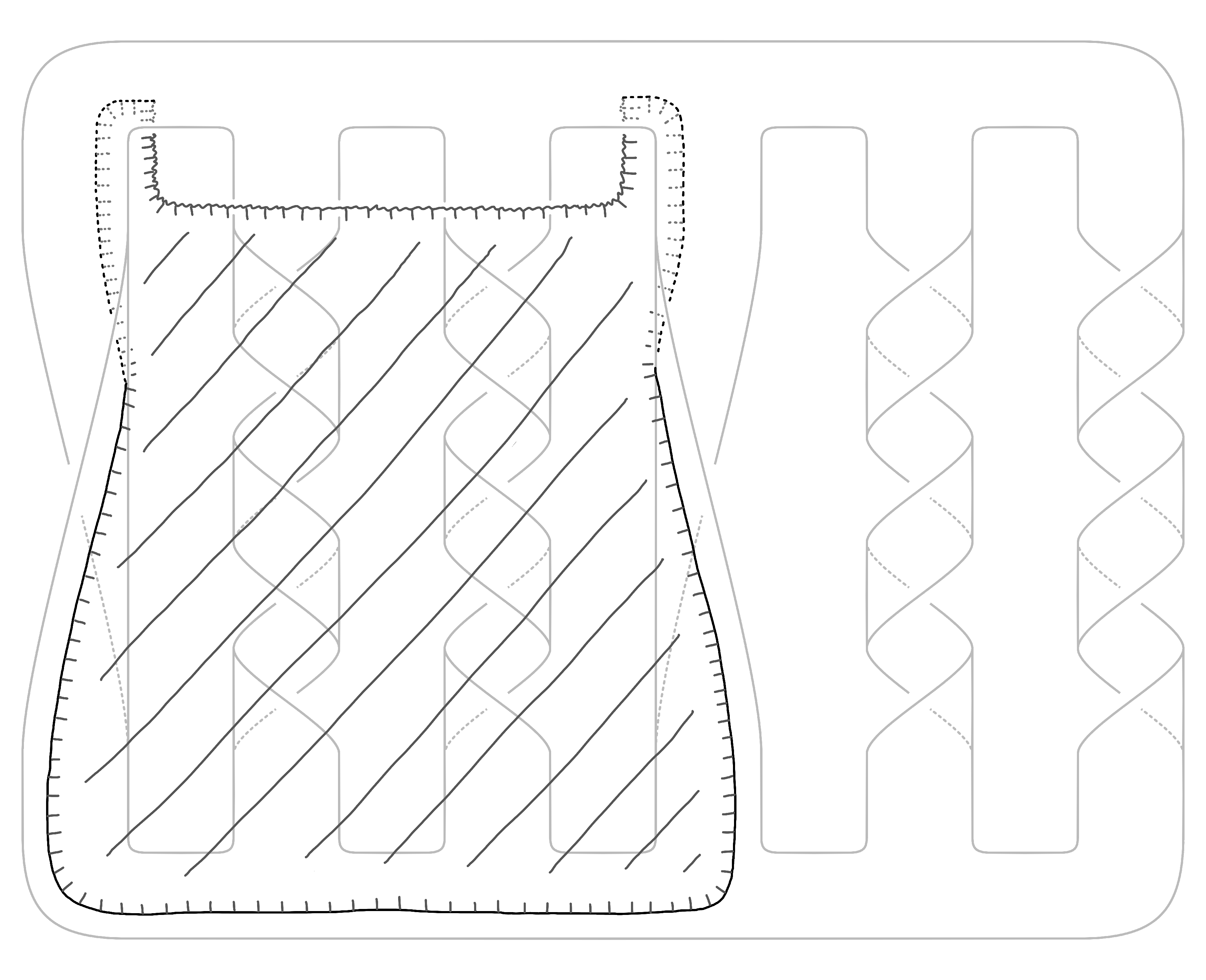} \\
 \includegraphics[scale=0.15]{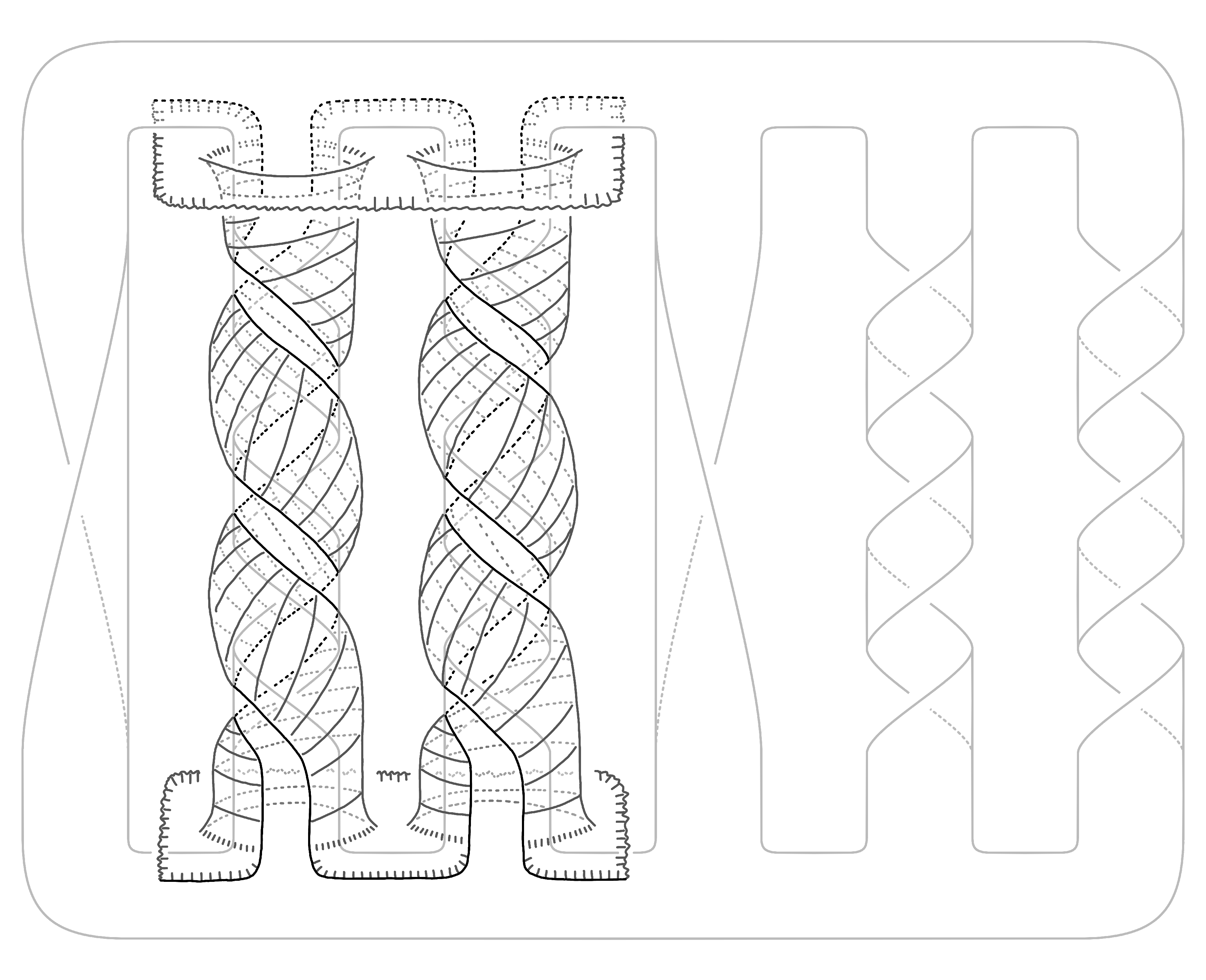} \\
 \includegraphics[scale=0.15]{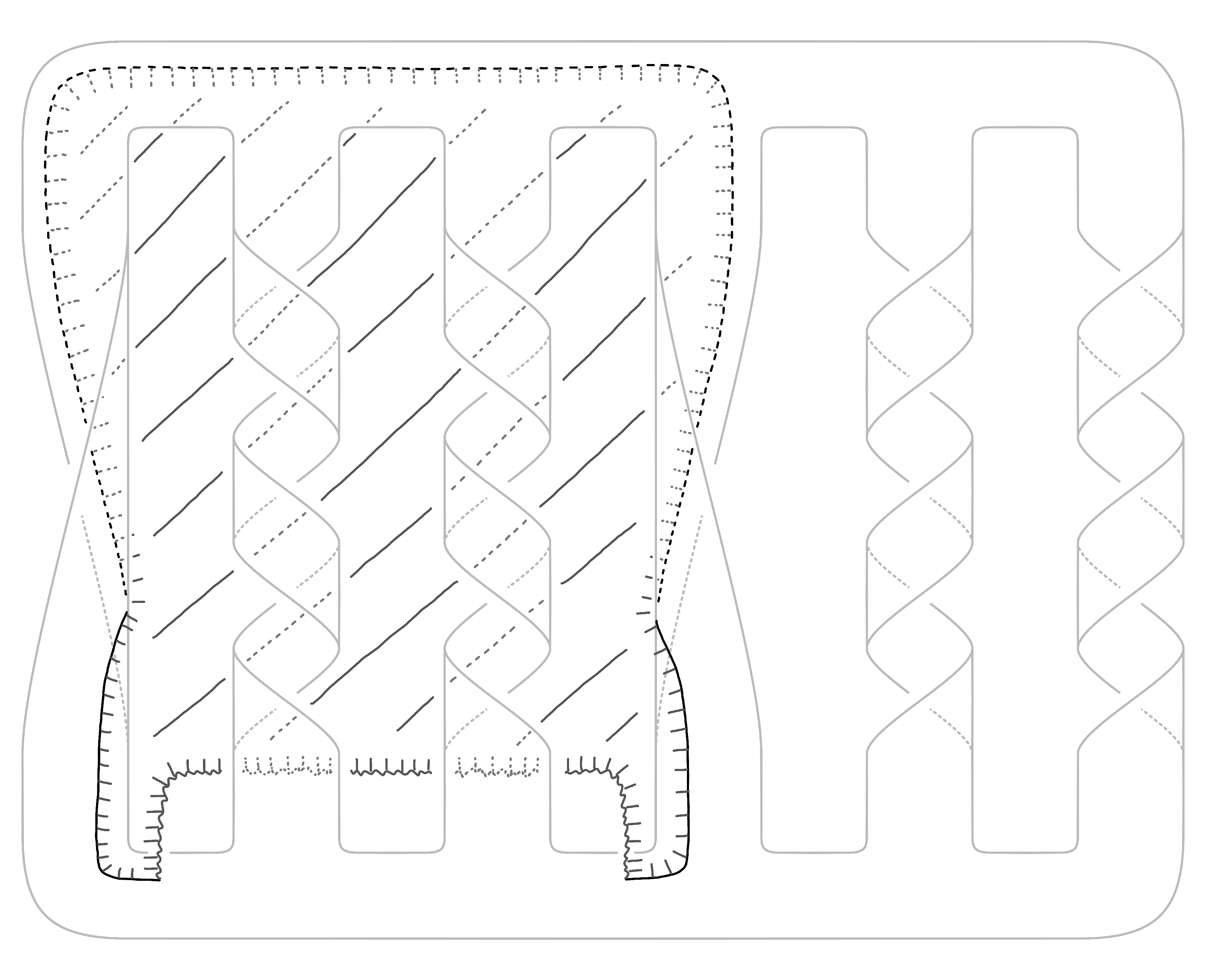}
 \caption{An altering surface for the Seifert surface depicted in Figure \ref{fig:1_m5_m5_m1_5_5_orig}, which is decomposed into three peaces along jagged lines for clarity}
 \label{fig:1_m5_m5_m1_5_5_altering_surface}
\end{figure}

In general, for some integers $m, n \geq 2$, let $k_{1}, k_{2}, \dots, k_{m}$ and $l_{1}, l_{2}, \dots, l_{n}$ be integers whose absolute values are greater than or equal to $5$.
Consider a Seifert surface of the $(1, k_{1}, k_{2}, \dots, k_{m}, -1, l_{1}, l_{2}, \dots, l_{n})$-pretzel link in a similar way to Figure \ref{fig:1_m5_m5_m1_5_5_orig}.
Then we can check along the same line to \cite{Oer1980, Par1978} that this Seifert surface is not minimal but incompressible.
On the other hand, we have an altering surface of the Seifert surface in a similar way to Figure \ref{fig:1_m5_m5_m1_5_5_altering_surface} which yields a minimal Seifert surface of the link.
We note that this altering surface is homeomorphic to the twice punctured genus $\frac{m - 2}{2}$ surface if $m$ is even, otherwise the once punctured genus $\frac{m - 1}{2}$ surface.
\end{example}

\begin{example}
It is easy to see that the SAS consisting of the four mutually disjoint altering surfaces, depicted in Figure \ref{fig:trivial_knot_SAS}, for Kobayashi's Seifert surface of the trivial knot, mentioned in Section \ref{sec:introduction}, yields a minimal Seifert surface (i.e., a disk) of the trivial knot.
\begin{figure}[htbp]
 \centering
 \includegraphics[scale=0.20]{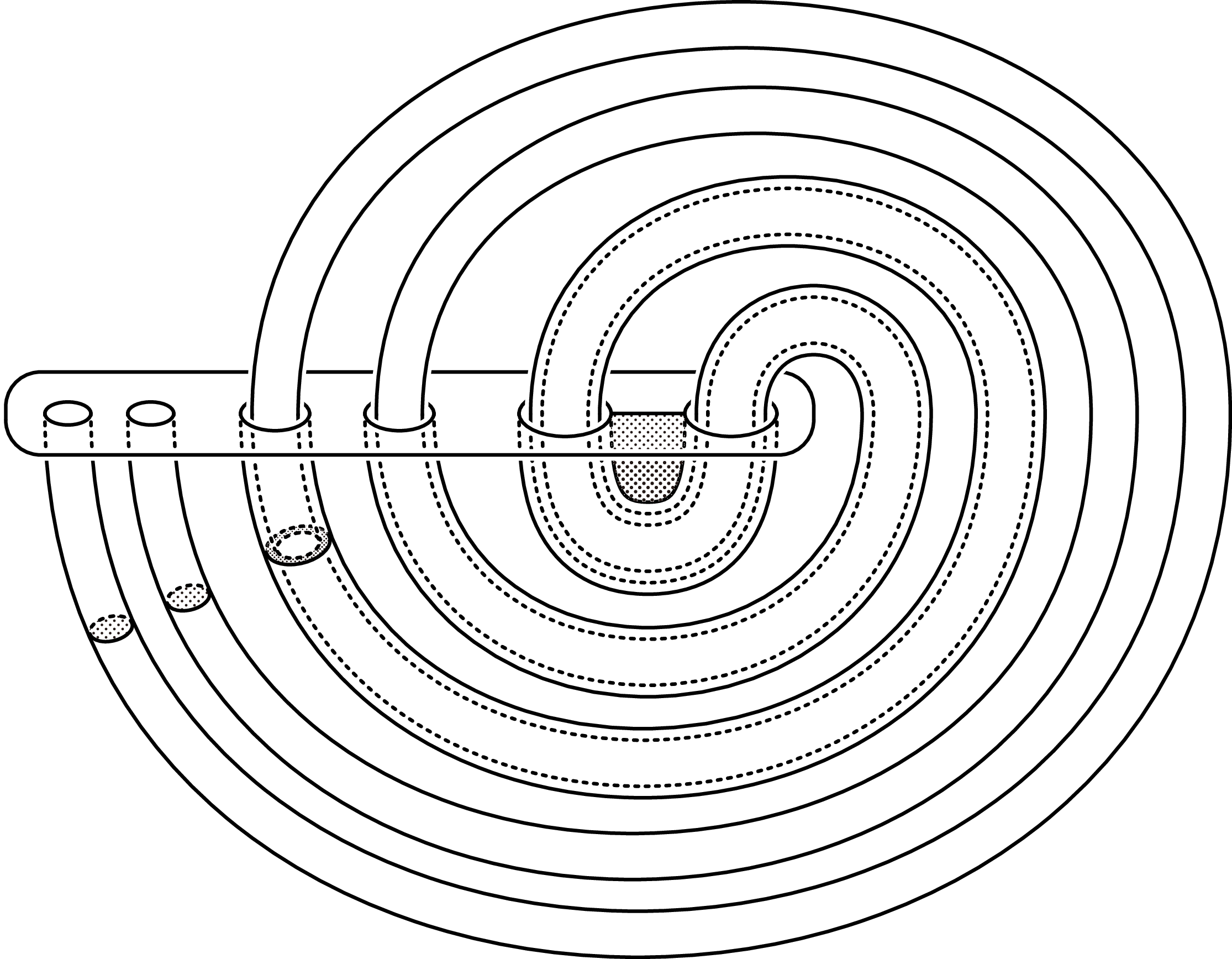}
 \caption{Four mutually disjoint altering surfaces (dotted ones) for Kobayashi's Seifert surface of the trivial knot}
 \label{fig:trivial_knot_SAS}
\end{figure}
\end{example}

\section*{Acknowledgments}
The author wishes to express his gratitude to Professor Mikami Hirasawa for invaluable conversations.
He is partially supported by JSPS KAKENHI Grant Number JP19K03476.

\bibliographystyle{amsplain}

\end{document}